\def\Med{{\rm Med\,}}
\def\Cov{{\rm Cov\,}}
\def\cA{\mathcal{A}}
\def\E{\mathbb E}
\def\p{\mathbb P}
\newcommand*{\ind}[1]{\mathbf{1}_{\{#1\}}}
\newcommand*{\Ind}[1]{\mathbf{1}_{#1}}
\newcommand{\ub}[1]{^{(#1)}}
\newcommand{\R}{\mathbb{R}}
\newcommand{\N}{\mathbb{N}}
\newcommand{\id}{\mathrm{Id}}
\newcommand{\tr}{\mathrm{tr}}
\newtheorem{lemma}{Lemma}[section]
\newtheorem{theorem}[lemma]{Theorem}
\newtheorem{defi}[lemma]{Definition}
\title[A note on the Hanson-Wright inequality]{A note on the Hanson-Wright inequality for random vectors with dependencies}
\author{Rados{\l}aw Adamczak}
\address{Institute of Mathematics, University of Warsaw, ul. Banacha 2,
  02-097 Warszawa, POLAND.}
\thanks{Institute of Mathematics, University of Warsaw, e-mail: R.Adamczak@mimuw.edu.pl. Research partially supported by the NCN grant no. 2012/05/B/ST1/00412}
\begin{document}

\begin{abstract}
We prove that quadratic forms in isotropic random vectors $X$ in $\R^n$, possessing the convex concentration property with constant $K$, satisfy the Hanson-Wright inequality with constant $CK$, where $C$ is an absolute constant, thus eliminating the logarithmic (in the dimension) factors in a recent estimate by Vu and Wang. We also show that the concentration inequality for all Lipschitz functions implies a uniform version of the Hanson-Wright inequality for suprema of quadratic forms (in the spirit of the inequalities by Borell, Arcones-Gin\'{e} and Ledoux-Talagrand). Previous results of this type relied on stronger isoperimetric properties of $X$ and in some cases provided an upper bound on the deviations rather than a concentration inequality.

In the last part of the paper we show that the uniform version of the Hanson-Wright inequality for Gaussian vectors can be used to recover a recent concentration inequality for empirical estimators of the covariance operator of $B$-valued Gaussian variables due to Koltchinskii and Lounici.
\end{abstract}

\keywords{Hanson-Wright inequality, quadratic forms, concentration of measure, covariance operator}

\maketitle
\section{Introduction}

The Hanson-Wright inequality asserts that if $X_1,\ldots,X_n$ are independent mean zero, variance one random variables with sub-Gaussian tail decay, i.e. such that
for all $t > 0$,
\begin{displaymath}
\p(|X_i|\ge t) \le 2 \exp(-t^2/K^2),
\end{displaymath}
and $A = [a_{ij}]_{i,j=1}^n$ is an $n\times n$ matrix, then the quadratic form
\begin{displaymath}
Z = \sum_{i,j=1}^n a_{ij}X_i X_j
\end{displaymath}
satisfies the inequality
\begin{displaymath}
\p(|Z - \tr A| \ge t) \le 2\exp\Big(-\min\Big(\frac{t^2}{CK^4\|A\|_{HS}^2},\frac{t}{CK^2\|A\|}\Big)\Big)
\end{displaymath}
for all $t > 0$, where $C$ is a universal constant.
Here and in what follows $\|A\|_{HS} = (\sum_{i,j\le n} a_{ij}^2)^{1/2}$ is the Hilbert-Schmidt norm of $A$, whereas $\|A\| = \sup_{|x|\le 1} |Ax|$ is the operator norm of $A$ ($|\cdot|$ denotes the standard Euclidean norm in $\R^n$).
Actually Hanson and Wright \cite{HansonWright} proved a somewhat weaker inequality in which $\|A\|$ was replaced by the operator norm of the matrix $\tilde{A} = [|a_{ij}|]_{i,j=1}^n$. The original argument worked also only for symmetric random variables, the general mean zero case was proved by Wright in \cite{MR0353419}. The above version with the operator norm of $A$ appeared in many works under different sets of assumptions. For Gaussian variables it follows from estimates for general Banach space valued polynomials by Borell \cite{Bo} and Arcones-Gin\'e \cite{AG}. Independent proofs were also provided by Ledoux-Talagrand \cite{LT} and Lata{\l}a \cite{L1,L2}. It is also well known that the general case can be reduced to the Gaussian one by comparison of moments or a decoupling and contraction approach \cite{MR2322114,MR3096532,nonlipschitz,MR3125258}. As observed by Lata{\l}a \cite{L1} in the Gaussian case the Hanson-Wright inequality can be reversed (up to universal constants). Lata{\l}a provided also two-sided moment and tail inequalities for higher degree homogeneous forms in Gaussian variables \cite{L2} (see also \cite{nonlipschitz}).

The interest in Hanson-Wright type estimates has been recently revived in connection with non-asymptotic theory of random matrices and related statistical problems \cite{RSA:RSA20561,MR2880281}. Since in many applications one considers quadratic forms in random vectors with dependencies among coefficients, some recent work has been devoted to proving counterparts of the Hanson-Wright inequality in a dependent setting. In particular in \cite{MR2994877} a corresponding upper tail inequality is proved for positive definite matrices and sub-Gaussian random vectors $X$ (we recall that a random vector $X$ in $\R^n$ is sub-Gaussian with constant $K$ if for all $u \in S^{n-1}$, and all $t > 0$, $\p(|\langle X, u\rangle| \ge t) \le 2\exp(-t^2/K^2)$).  It is easy to see that in this setting one cannot hope for a lower tail estimate as a sub-Gaussian random vector can vanish with probability separated from zero. In \cite{RSA:RSA20561}, Vu and Wang consider vectors satisfying the convex concentration property (see Definition \ref{def:convex concentration} below) and prove that if $X$ is a random vector in $\R^n$ in the isotropic position (i.e. with mean zero and covariance matrix equal to identity) which has the convex concentration property with constant $K$, then for all $t > 0$,
\begin{align}\label{eq:Vu-Wang}
\p(|X^TAX - \tr A| \ge t) \le C\log n\exp\Big(-CK^{-2}\min\Big(\frac{t^2}{\|A\|_{HS}^2\log n},\frac{t}{\|A\|}\Big)\Big).
\end{align}
(We remark that Vu and Wang considered complex random vectors with complex conjugate-transpose operation instead of transpose, but since we are interested here primarily in the real case, we do not state their result in this version. In fact it is not difficult to pass from the real version to the complex one).

One of the objectives of this paper is to remove the dependence on dimension in the above estimate (Theorem \ref{thm:Hanson-Wright} below) as well as to prove corresponding uniform estimates for suprema of quadratic forms under some stronger assumptions on the random vector $X$ (Theorem \ref{thm:uniform_Hanson-Wright}). Such uniform versions (corresponding to Banach space valued quadratic forms) for Gaussian random vectors were considered e.g. by Borell \cite{Bo} and Arcones-Gin\'e \cite{AG}, whereas the Rademacher case was studied by Talagrand \cite{TalagrandNewConc} and Bousquet-Boucheron-Lugosi-Massart \cite{BBLM}.
In Theorem \ref{thm:uniform_Hanson-Wright} we prove that a uniform estimate is a consequence of the concentration property for Lipschitz functions.

The estimates provided by uniform Hanson-Wright inequalities are expressed in terms of expectations of suprema of certain empirical processes. Since estimating such expectations is in general difficult, direct applications of such inequalities are limited. In our last result, Theorem \ref{thm:Koltchinskii-Lounici} presented in Section \ref{sec:covariance}, we provide one example in which it is possible to effectively bound the empirical process involved in the estimate, i.e. we recover a recent concentration result for empirical approximations of the covariance operator for Banach space valued Gaussian variables, obtained first by Koltchinskii and Lounici by other methods \cite{2014arXiv1405.2468K}.

\medskip

The organization of the paper is as follows. In the next section we present our main results together with some additional discussion. Next, in Section \ref{sec:proofs} we provide proofs. Finally, in Section \ref{sec:covariance} we present the aforementioned application of uniform estimates for quadratic forms.

\medskip

\paragraph{\bf Acknowledgements}  The author would like to thank Vladimir Koltchinskii and Karim Lounici for interesting conversations during The Seventh International Conference on High Dimensional Probability. The results of this paper grew directly out of those conversations. Separate thanks go to the organizers of the conference.

\section{Main results\label{sec:main_results}}
To introduce the setting for our estimates let us first recall the standard definitions of concentration properties of random vectors.

\begin{defi}[Concentration property]\label{def:concentration}
Let $X$ be a random vector in $\R^n$. We will say that $X$ has the concentration property with constant $K$ if for every 1-Lipschitz function $\varphi \colon \R^n \to \R$, we have $\E |\varphi(X)| < \infty$ and for every $t > 0$,
\begin{displaymath}
\p(|\varphi(X) - \E\varphi(X)| \ge t) \le 2\exp(-t^2/K^2).
\end{displaymath}
\end{defi}

The concentration property of random vectors has been extensively studied in the recent forty years, starting with the celebrated results by Borell \cite{BorellGaussianConc} and
Sudakov-Tsirelson \cite{SCGaussianConc} who established it for Gaussian measures. Many efficient techniques for proving concentration have been discovered, including e.g. isoperimetric techniques, functional inequalities, transportation of measure, semigroup tools. We refer to the monograph \cite{LedouxConcBook} by Ledoux for a thorough discussion of this topic.

\begin{defi}[Convex concentration property]\label{def:convex concentration}
Let $X$ be a random vector in $\R^n$. We will say that $X$ has the convex concentration property with constant $K$ if for every 1-Lipschitz convex function $\varphi \colon \R^n \to \R$, we have $\E |\varphi(X)| < \infty$ and for every $t > 0$,
\begin{displaymath}
\p(|\varphi(X) - \E\varphi(X)| \ge t) \le 2\exp(-t^2/K^2).
\end{displaymath}
\end{defi}

\paragraph{\bf Remarks}
\paragraph{1.} The convex concentration property has been first observed by Talagrand, who proved it for the uniform measure on the discrete cube \cite{TalCube} and for general product measures with bounded support \cite{MR1361756} by means of his celebrated convex distance inequality. In the non-product case it has been obtained by Samson \cite{Samson} for vectors satisfying some uniform mixing properties and recently by Paulin \cite{Paulin} under Dobrushin type criteria. From Talagrand's results it also follows that the convex concentration property is satisfied by vectors obtained via sampling without replacement \cite{Paulin, 2014arXiv1402.3660A}. Sub-Gaussian estimates for the upper tails of Lipschitz functions of product random vectors were also obtained by Ledoux \cite{MR1399224} and later Adamczak in the unbounded case \cite{AdLogSobConv} by means of log-Sobolev inequalities.

\paragraph{2.} Note that the convex concentration property is preserved if we replace $X$ with $UX+b$,where $U$ is a deterministic orthogonal matrix and $b\in \R^n$.

Our first result is the following

\begin{theorem}\label{thm:Hanson-Wright}
Let $X$ be a mean zero random vector in $\R^n$. If $X$ has the convex concentration property with constant $K$ then
for any $n\times n$ matrix $A = [a_{ij}]_{i,j=1}^n$ and every $t> 0$,
\begin{align}\label{eq:Hanson-Wright}
\p(|X^T A X - \E (X^T A X)| \ge t) &\le 2\exp\Big(-\frac{1}{C}\min\Big(\frac{t^2}{K^2\|A\|_{HS}^2\|\Cov(X)\|},\frac{t}{K^2\|A\|}\Big)\Big)\\
&\le 2\exp\Big(-\frac{1}{C}\min\Big(\frac{t^2}{2K^4\|A\|_{HS}^2},\frac{t}{K^2\|A\|}\Big)\Big).\nonumber
\end{align}
for some universal constant $C$.
\end{theorem}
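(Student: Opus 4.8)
The plan is to split $X^{T}AX-\E X^{T}AX$ into a linear form and a quadratic form in an auxiliary \emph{sub-Gaussian} vector manufactured from $X$; passing to this vector is precisely what avoids the net argument responsible for the logarithmic loss in \eqref{eq:Vu-Wang}.

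\emph{Reductions.} Replacing $A$ by $(A+A^{T})/2$ changes neither $X^{T}AX$ nor increases $\|A\|$ or $\|A\|_{HS}$, so we may take $A$ symmetric. Writing $A=UDU^{T}$ with $U$ orthogonal and $D=\mathrm{diag}(\lambda_{1},\dots,\lambda_{n})$ and replacing $X$ by $U^{T}X$, which by Remark~2 preserves the convex concentration property with the same constant $K$, preserves $\|\Cov(X)\|$, and leaves $\|A\|$, $\|A\|_{HS}$ unchanged, we may assume $A=D$ is diagonal, so $X^{T}AX=\sum_{i}\lambda_{i}X_{i}^{2}$.

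\emph{The sub-Gaussian vector.} Set $m_{i}=\E|X_{i}|$ and $\xi_{i}=|X_{i}|-m_{i}$, $\xi=(\xi_{1},\dots,\xi_{n})$. For a vector $c$ with non-negative coordinates the map $x\mapsto\sum_{i}c_{i}|x_{i}|$ is convex and $|c|$-Lipschitz, so by the convex concentration property $\langle c,\xi\rangle$ is centered with $\p(|\langle c,\xi\rangle|\ge t)\le 2\exp(-t^{2}/(K^{2}|c|^{2}))$; splitting an arbitrary $c$ into its positive and negative parts yields the same bound for all $c$ up to absolute constants, i.e.\ $\xi$ is a mean-zero sub-Gaussian vector with constant $O(K)$. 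Expanding,
\[
X^{T}AX=\sum_{i}\lambda_{i}m_{i}^{2}+2\langle \Lambda m,\xi\rangle+\xi^{T}\Lambda\xi,\qquad \Lambda=\mathrm{diag}(\lambda_{i}),
\]
and since $\E\xi=0$ we get $X^{T}AX-\E X^{T}AX=2\langle\Lambda m,\xi\rangle+\bigl(\xi^{T}\Lambda\xi-\E\xi^{T}\Lambda\xi\bigr)$. The linear part is immediate: $\langle\Lambda m,\xi\rangle$ is sub-Gaussian with variance proxy $O\bigl(K^{2}\sum_{i}\lambda_{i}^{2}m_{i}^{2}\bigr)$, and $m_{i}^{2}\le\E X_{i}^{2}=(\Cov X)_{ii}\le\|\Cov X\|$, so this is $O\bigl(K^{2}\|\Cov X\|\,\|A\|_{HS}^{2}\bigr)$ — exactly the Gaussian regime of \eqref{eq:Hanson-Wright}. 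The remainder $\xi^{T}\Lambda\xi-\E\xi^{T}\Lambda\xi$ is a quadratic form in the sub-Gaussian vector $\xi$, with $\|\Lambda\|=\|A\|$, $\|\Lambda\|_{HS}=\|A\|_{HS}$; I would control it by splitting $\Lambda=\Lambda_{+}-\Lambda_{-}$ and applying a Hanson--Wright inequality for sub-Gaussian random vectors (for the positive semidefinite pieces, the upper-tail estimate of \cite{MR2994877}), complemented by lower-tail bounds for $\xi^{T}\Lambda_{\pm}\xi$ (here one uses that $\xi_{i}\ge -m_{i}$ is bounded below and that $\E\xi_{i}^{2}$ is controlled, so a second-moment/Paley--Zygmund argument suffices in the only relevant range). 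A union bound over a splitting of $t$ between the two contributions then yields the first inequality; the second follows by bounding $\|\Cov X\|\le 2K^{2}$, itself a consequence of the convex concentration property applied to linear functionals $x\mapsto\langle u,x\rangle$.

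\emph{Main obstacle.} The delicate point is the quadratic remainder: the crude estimate ``$\xi$ is sub-Gaussian with constant $O(K)$'' only produces $K^{4}\|A\|_{HS}^{2}$ in the Gaussian regime (which already gives the second inequality in \eqref{eq:Hanson-Wright}), whereas the sharp constant $K^{2}\|\Cov X\|\,\|A\|_{HS}^{2}$ of the first inequality requires exploiting that $\xi$ has Bernstein-type rather than merely sub-Gaussian tails — concretely, that the fourth moments $\E\xi_{i}^{4}$ are comparable to $K^{2}(\Cov X)_{ii}$ rather than to $K^{4}$. Getting the remainder term to respect this refined bound, without any net or union bound over directions, is where the work lies.
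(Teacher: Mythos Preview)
Your reductions (symmetrize, diagonalize, pass to $Y=U^TX$) match the paper, and the bound $\|\Cov X\|\le 2K^2$ is exactly how the paper derives the second line of \eqref{eq:Hanson-Wright} from the first. The decomposition via $\xi_i=|X_i|-m_i$ is a nice idea and the linear piece is fine. But the quadratic remainder is a genuine gap, and not only for the reason you flag at the end.

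The problem is circularity. After your reduction you are left with $\xi^T\Lambda\xi-\E\xi^T\Lambda\xi$ for a \emph{dependent} sub-Gaussian vector $\xi$, and you propose to invoke a Hanson--Wright inequality for sub-Gaussian vectors. No such inequality is available: as the introduction of the paper points out, sub-Gaussianity of a vector alone cannot yield a lower-tail Hanson--Wright bound (a sub-Gaussian vector may vanish with probability bounded away from zero). The reference you cite, \cite{MR2994877}, gives only the upper tail for positive semidefinite $\Lambda_\pm$, and your Paley--Zygmund patch does not produce exponential lower-tail concentration---it gives at best $\p(\xi^T\Lambda_+\xi\ge c\,\E\xi^T\Lambda_+\xi)\ge c'$, which says nothing about $\p(\xi^T\Lambda_+\xi\le \E\xi^T\Lambda_+\xi-t)$. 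Nor can you recover convex concentration for $\xi$ from that of $X$: the map $x\mapsto(|x_1|-m_1,\ldots,|x_n|-m_n)$ does not preserve convexity of test functions (e.g.\ $\psi(y)=-y_1$ becomes the concave $x\mapsto m_1-|x_1|$). So the quadratic remainder sits exactly where you started, and the argument is incomplete even for the weaker $K^4$ bound.

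The paper takes a completely different route that avoids any black-box Hanson--Wright. After reducing to $\varphi(Y)=\sum_i\mu_iY_i^2$ with $\mu_i\ge 0$, it builds a \emph{convex, globally Lipschitz} surrogate for the non-convex $\varphi$ via the affine-envelope construction
\[
f(y)=\max_{x\in B}\bigl(\langle\nabla\varphi(x),y-x\rangle+\varphi(x)\bigr),
\]
where $B=\{y:|\nabla\varphi(y)|\le (\E|\nabla\varphi(Y)|^2)^{1/2}+\sqrt{t\max_i\mu_i}\}$. On $B$ one has $f=\varphi$ by convexity of $\varphi$, and $f$ is $M$-Lipschitz with $M=(\E|\nabla\varphi(Y)|^2)^{1/2}+\tfrac12\sqrt{t\max_i\mu_i}$. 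Convex concentration applied to $f$ gives sub-Gaussian fluctuations with variance proxy $K^2M^2$; convex concentration applied to the convex Lipschitz map $y\mapsto(\sum_i\mu_i^2y_i^2)^{1/2}$ controls $\p(Y\notin B)$; and a short quantile-bypass lemma merges the two into the desired two-regime bound. The sharp constant $K^2\|\Cov X\|\|A\|_{HS}^2$ drops out because $\E|\nabla\varphi(Y)|^2=4\sum_i\mu_i^2\E Y_i^2\le 4\|\Cov X\|\sum_i\mu_i^2$---no fourth-moment or Bernstein refinement is needed.
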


\paragraph{\bf Remarks}

\paragraph{1.} The above theorem improves the estimate \eqref{eq:Vu-Wang} due to Vu-Wang by removing the dimension dependent factors (note that in the isotropic case $\E X^TAX = \tr A$ and $\|\Cov(X)\| = \|\id\| = 1$).

\paragraph{2.} The assumption that $X$ is centered is introduced just to simplify the statement of the theorem. Note that if $X$ has the convex concentration property  with constant $K$, then so does $\tilde{X} = X - \E X$. Moreover, a quadratic form in $X$ can be decomposed into a sum of a quadratic form in $\tilde{X}$ and an affine function of $X$. Since linear functions are convex, Lipschitz, their deviations can be controlled by the convex concentration property.
We leave the precise formulation of the corresponding inequality to the Reader.
\paragraph{3.} As it will become clear from the proof, similar theorems hold if instead of sub-Gaussian concentration inequality for convex functions one assumes some other rate of decay for the tail probabilities. The whole argument remains then valid, one just has to modify accordingly the right-hand side of \eqref{eq:Hanson-Wright}. Convex concentration property with sub-exponential tail decay was studied e.g. in \cite{MR1701522}.
\paragraph{4.} We remark that it is not true that if $X = (X_1,\ldots,X_n)$ where $X_i$ are i.i.d. sub-Gussian random variables, then $X$ has the convex concentration property with a constant independent of dimension (as noted in \cite{AdLogSobConv} following \cite{MR1491527}). Therefore, Theorem \ref{thm:Hanson-Wright} does not imply the standard Hanson-Wright inequality.

Our second result concerns a uniform version of the Hanson-Wright inequality for suprema of quadratic forms and is contained in the following

\begin{theorem}\label{thm:uniform_Hanson-Wright}
Let $X$ be a mean zero random vector in $\R^n$. Assume that $X$ has the concentration property with constant $K$. Let $\cA$ be a bounded set of $n\times n$ matrices and consider the random variable
\begin{displaymath}
Z = \sup_{A\in \cA} \Big(X^T AX - \E X^T AX\Big).
\end{displaymath}
Then, for every $t > 0$,
\begin{align}\label{eq:uniform_Hanson-Wright}
\p(|Z - \E Z| \ge t) \le 2\exp\Big(-\frac{1}{C}\min\Big(\frac{t^2}{K^2 \|X\|_\cA^2 },\frac{t}{K^2\sup_{A\in \cA}\|A\|}\Big)\Big),
\end{align}
where
\begin{align*}
\|X\|_{\cA} = \E\sup_{A = [a_{ij}]_{i,j=1}^n \in \cA} |(A+A^T)X|
\end{align*}
and $C$ is a universal constant.
\end{theorem}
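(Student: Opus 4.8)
The plan is to write $Z=f(X)$ with $f(x)=\sup_{A\in\cA}\big(x^TAx-\E X^TAX\big)$ and, since $f$ is far from Lipschitz, to exploit instead that its ``gradient'' is controlled. Put $M=\sup_{A\in\cA}\|A\|$ and $h(x)=\sup_{A\in\cA}|(A+A^T)x|$, so that $\|X\|_\cA=\E h(X)$. I would first record the elementary identity
\[
x^TAx-y^TAy=(x-y)^T(A+A^T)y+(x-y)^TA(x-y)
\]
together with its twin obtained by exchanging $x$ and $y$; taking suprema over $A\in\cA$, these yield the local Lipschitz estimate
\[
|f(x)-f(y)|\le |x-y|\,\min\big(h(x),h(y)\big)+M|x-y|^2 .
\]
Next, since $h$ is a supremum of seminorms, it is convex and $2M$-Lipschitz, so the concentration property gives $\p\big(h(X)\ge\|X\|_\cA+s\big)\le 2\exp(-s^2/(4K^2M^2))$ for $s>0$.

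The core step is a deviation bound for $Z$ about its median $m=\Med(Z)$, combining the two facts above with the isoperimetric reformulation of the concentration property. Fixing $s=c_1KM$ so that $\mathcal D=\{h\le\|X\|_\cA+s\}$ has probability $\ge 3/4$, and setting $\mathcal C=\{f\le m\}$, the set $\mathcal S=\mathcal C\cap\mathcal D$ has probability $\ge 1/4$; applying the concentration property to the $1$-Lipschitz map $x\mapsto\dist(x,\mathcal S)$ gives $\E\dist(X,\mathcal S)\le c_2K$ and hence $\p(X\notin\mathcal S_r)\le 2\exp(-r^2/(4K^2))$ for $r\ge 2c_2K$, where $\mathcal S_r$ is the $r$-enlargement of $\mathcal S$. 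On $\{X\in\mathcal S_r\}$ there is $x\in\mathcal S$ with $|X-x|\le r$, and the local Lipschitz estimate with $h$ evaluated at the good point $x\in\mathcal D$ yields $f(X)\le m+(\|X\|_\cA+s)r+Mr^2$; thus for all $r\ge 2c_2K$,
\[
\p\big(Z-m\ge(\|X\|_\cA+c_1KM)r+Mr^2\big)\le 2\exp(-r^2/(4K^2)),
\]
and the symmetric choice $\mathcal C=\{f\ge m\}$ gives the matching lower deviation. Note that $\mathcal S$ is not convex, so the full concentration property, and not merely its convex version, is genuinely used here.

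Finally I would optimize over $r$ and pass to the mean: taking $r\asymp\min\big(t/(\|X\|_\cA+KM),\sqrt{t/M}\big)$ turns the last display, for $t$ larger than a suitable absolute multiple of $K\|X\|_\cA+K^2M$, into $\p(|Z-m|\ge t)\le 2\exp(-C^{-1}\min(t^2/(K^2\|X\|_\cA^2),t/(K^2M)))$, using $(a+b)^2\le 2a^2+2b^2$ and $t^2/(u+v)\ge\frac12\min(t^2/u,t^2/v)$; for smaller $t$ the right-hand side exceeds $1$ once $C$ is large. Integrating this tail bound gives $|\E Z-m|\le C'(K\|X\|_\cA+K^2M)$, and substituting it back (replacing $t$ by $t/2$ in the nontrivial range) produces \eqref{eq:uniform_Hanson-Wright}. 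I expect the main obstacle to be conceptual — finding the right way to ``linearize'' the non-Lipschitz $Z$, which is precisely the pairing of the isoperimetric estimate for $\dist(\cdot,\mathcal S)$ with the separate concentration of $h(X)$ — after which the only remaining difficulty is the somewhat delicate bookkeeping needed to pick a single universal constant $C$ absorbing all the thresholds, in particular the compatibility of the nontrivial range of $t$ with the constraint $r\ge 2c_2K$.
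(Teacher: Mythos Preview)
Your argument is correct and follows a genuinely different technical route from the paper, though the conceptual core is the same: both proofs exploit that the ``gradient'' $h(x)=\sup_{A\in\cA}|(A+A^T)x|$ is itself $2M$-Lipschitz and hence concentrates, and that $f$ is locally Lipschitz with constant governed by $h$. The paper implements this by restricting $f$ to the convex set $B=\{h<\|X\|_\cA+\sqrt{tM}\}$, extending $f|_B$ to a globally $M$-Lipschitz function $g$ via McShane's lemma, applying the concentration hypothesis directly to $g(X)$, and then invoking a prepackaged ``quantile bypass'' lemma that transfers concentration from $g(X)$ to $f(X)$ using the bound on $\p(X\notin B)$. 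You instead keep the two-term local estimate $|f(x)-f(y)|\le|x-y|\min(h(x),h(y))+M|x-y|^2$ intact, fix a good set $\mathcal D=\{h\le\|X\|_\cA+c_1KM\}$ of probability at least $3/4$, and apply the concentration property to the $1$-Lipschitz function $\dist(\cdot,\mathcal S)$ with $\mathcal S=\{f\le m\}\cap\mathcal D$; the quadratic term $Mr^2$ then plays the role that the $t$-dependent threshold $\sqrt{tM}$ plays in the paper's definition of $B$. Your route is slightly more elementary in that it avoids the McShane extension, and your remark that $\mathcal S$ is non-convex (so the full concentration property, not just the convex one, is needed) is exactly the point; the paper's route is a bit more modular and reuses the same two lemmas as in its proof of the non-uniform case. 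The bookkeeping you flag at the end --- absorbing the additive $c_1KM$ in $\alpha$ and matching the range $r\ge 2c_2K$ to the trivial range $t\lesssim K\|X\|_\cA+K^2M$ --- goes through as you indicate.
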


\paragraph{\bf Remarks}

\paragraph{1.} One can easily see that if $\cA = \{A\}$, then $\|X\|_\cA \le 2\|A\|_{HS}\sqrt{\|\Cov X\|}$. If in addition $X$ has the convex concentration property with constant $K$, then $\|\Cov X\| \le 2K^2$ (see the proof of Theorem \ref{thm:Hanson-Wright} below). Thus the conclusion of the above theorem is stronger than that of Theorem \ref{thm:Hanson-Wright}. On the other hand the assumption is also stronger. We do not know if \eqref{eq:uniform_Hanson-Wright} is implied just by the convex concentration property. This is the case if instead of $\sup_{A\in \cA} X^TAX$ one considers $\sup_{A\in \cA} X^TAY$, where $Y$ is an independent copy of $X$ (see \cite{AdLogSobConv}).

\paragraph{2.} As mentioned in the Introduction, inequalities similar to \eqref{eq:uniform_Hanson-Wright} have been proven by many authors under various sets of assumptions. In particular Borell \cite{Bo} and Arcones-Gin\'e \cite{AG} obtained inequalities for Banach space valued polynomials in Gaussian random variables. When specialised to quadratic forms, these inequalities give an upper bound on $\p(\sup_{A\in \cA} |X^T AX |\ge M + t)$, where $M$ is a certain quantile of $\sup_{A\in \cA} |X^T AX|$. The proofs are based on the Gaussian isoperimetric inequality. We do not see how to adapt their arguments to get concentration around the mean rather then deviation above a multiple of the mean. Talagrand \cite{TalagrandNewConc} proved a concentration inequality for suprema of quadratic forms in Rademacher variables, which via the Central Limit Theorem implies the concentration inequality in the Gaussian case. The upper bound in Talagrand's inequality was later generalized to higher order forms by Boucheron,Bousquet, Lugosi and Massart \cite{BBLM}.

\section{Proofs of the main results \label{sec:proofs}}
In what follows the letter $C$ will denote an absolute constant, the value of which may change between various occurrences (even in the same line).

\medskip
In the proofs we will need the following standard lemmas.

\begin{lemma}\label{le:quantiles}
Assume that a random variable $Z$ satisfies
\begin{displaymath}
\p(|Z - \E Z| \ge t) \le 2\exp(-t^2/K^2)
\end{displaymath}
for all $t > 0$. Consider $p \in (0,1)$ and let  $q_p Z = \inf\{t\in \R \colon \p(Z \le t) \ge p\}$ be the smallest $p$-th quantile of $Z$. Then
\begin{displaymath}
q_pZ \ge \E Z - K\sqrt{\log(2/p)}.
\end{displaymath}
\end{lemma}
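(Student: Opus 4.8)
The plan is to derive the quantile bound directly from the tail inequality together with the elementary fact that $\E Z = \int_0^\infty \p(Z > t)\,dt - \int_{-\infty}^0 \p(Z \le t)\,dt$, or more conveniently, to argue by contradiction. Suppose for contradiction that $q_p Z < \E Z - K\sqrt{\log(2/p)}$. By definition of the smallest $p$-th quantile, for any $s < q_p Z$ we have $\p(Z \le s) < p$; but since $\p(Z\le\cdot)$ is right-continuous, we actually get $\p(Z \le q_p Z) \ge p$, hence in particular $\p\big(Z \le \E Z - K\sqrt{\log(2/p)}\big) \ge \p(Z \le q_p Z) \ge p$ once we know $\E Z - K\sqrt{\log(2/p)} \ge q_p Z$ — wait, that is the reverse direction, so let me instead phrase it cleanly as follows.

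First I would set $t_0 = K\sqrt{\log(2/p)}$, so that $2\exp(-t_0^2/K^2) = p$. Then I apply the assumed two-sided tail bound to the lower tail: $\p(Z \le \E Z - t_0) \le \p(|Z - \E Z| \ge t_0) \le 2\exp(-t_0^2/K^2) = p$. This says that the point $\E Z - t_0$ has left-tail mass at most $p$. I then need to connect this to the smallest $p$-th quantile. Recall $q_p Z = \inf\{t : \p(Z \le t) \ge p\}$. I want to conclude $q_p Z \ge \E Z - t_0$, i.e. that no $t < \E Z - t_0$ can satisfy $\p(Z\le t)\ge p$. Indeed, if $t < \E Z - t_0$ then $\p(Z \le t) \le \p(Z \le \E Z - t_0) \le p$. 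This gives $\p(Z\le t)\le p$ rather than the strict inequality I would want, so to be careful I handle the boundary: either I note that if $\p(Z \le t) = p$ exactly for some $t < \E Z - t_0$ this is still consistent with $q_pZ$ possibly equalling such a $t$, which would still only give $q_p Z \ge$ something slightly less. The clean fix is to observe that the set $\{t : \p(Z\le t)\ge p\}$ is an interval of the form $[q_p Z, \infty)$ or $(q_p Z,\infty)$, and in either case its infimum $q_p Z$ satisfies: for every $\varepsilon > 0$, $\p(Z \le q_p Z + \varepsilon) \ge p$. Combined with $\p(Z \le t) \le p$ for all $t < \E Z - t_0$, this forces $q_p Z + \varepsilon \ge \E Z - t_0$ cannot be avoided — more precisely, if $q_p Z < \E Z - t_0$, pick $\varepsilon$ with $q_p Z + \varepsilon < \E Z - t_0$; then $p \le \p(Z \le q_p Z + \varepsilon) \le \p(Z \le \E Z - t_0) \le p$, so all these are equalities, which is not yet a contradiction.

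To get an actual contradiction I strengthen the tail estimate slightly: for $t' $ with $t_0 < t' $ we have $\p(Z\le \E Z - t') \le 2\exp(-t'^2/K^2) < p$ strictly. So if $q_pZ < \E Z - t_0$, choose $t'$ with $t_0 < t' < \E Z - q_p Z$; then $\E Z - t' > q_p Z$, so we may pick $\varepsilon>0$ with $q_pZ+\varepsilon \le \E Z - t'$, giving $p \le \p(Z \le q_pZ+\varepsilon)\le \p(Z\le \E Z - t') < p$, a contradiction. Hence $q_p Z \ge \E Z - t_0 = \E Z - K\sqrt{\log(2/p)}$, as claimed. The main (very mild) obstacle here is purely the careful handling of the infimum defining the quantile together with the non-strict versus strict inequalities in the tail bound; there is no analytic difficulty, and the argument is a short routine exercise in manipulating cumulative distribution functions.
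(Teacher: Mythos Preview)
Your argument is correct and follows the same route as the paper: assume $q_pZ < \E Z - K\sqrt{\log(2/p)}$ and derive a contradiction from the tail bound. The paper's version is simply the streamlined form of what you eventually reach---it uses directly that $\p(Z\le q_pZ)\ge p$ (right-continuity of the CDF) together with the strict inequality $\p(Z\le q_pZ)\le 2\exp(-(\E Z - q_pZ)^2/K^2)<p$, which holds because $\E Z - q_pZ > K\sqrt{\log(2/p)}$; your detour through $q_pZ+\varepsilon$ and an intermediate $t'$ is unnecessary once you invoke that standard fact.
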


\begin{proof}
Assume that $q_pZ < \E Z - K\sqrt{\log(2/p)}$. Then
\begin{displaymath}
\p(Z \le q_pZ ) < 2\exp(-K^2\log(2/p)/K^2) = p,
\end{displaymath}
which contradicts the standard inequality $\p(Z \le q_p Z)\ge p$.
\end{proof}

\begin{lemma}\label{le:median}
Assume that a random variable $Z$ satisfies
\begin{displaymath}
\p(|Z - \Med Z| \ge t) \le 2\exp\Big(-\min\Big(\frac{t^2}{a^2},\frac{t}{b}\Big)\Big)
\end{displaymath}
for all $t > 0$, where $\Med Z$ is a median of $Z$. Then for some absolute constant $C$ and all $t > 0$,
\begin{align}\label{eq:median}
\p(|Z - \E Z| \ge t) \le 2\exp\Big(-\frac{1}{C}\min\Big(\frac{t^2}{a^2},\frac{t}{b}\Big)\Big).
\end{align}
\end{lemma}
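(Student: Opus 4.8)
The plan is to reduce everything to the hypothesis by first showing that the mean and the median differ by at most a multiple of $a+b$, and then absorbing this shift into the constant. For the first step, since $|\E Z - \Med Z| \le \E|Z - \Med Z| = \int_0^\infty \p(|Z - \Med Z| \ge t)\,dt$ and $\exp(-\min(u,v)) = \max(e^{-u},e^{-v}) \le e^{-u} + e^{-v}$, the assumed tail estimate yields
\begin{displaymath}
|\E Z - \Med Z| \le \int_0^\infty 2\Big(e^{-t^2/a^2} + e^{-t/b}\Big)\,dt = \sqrt{\pi}\,a + 2b \le C_0(a+b)
\end{displaymath}
for an absolute constant $C_0$; the same computation shows $\E|Z| < \infty$, so $\E Z$ is well defined.

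For the second step I treat large $t$. If $t \ge 2C_0(a+b)$, then $t - C_0(a+b) \ge t/2$, so by the triangle inequality, the first step, and the hypothesis,
\begin{displaymath}
\p(|Z - \E Z| \ge t) \le \p\Big(|Z - \Med Z| \ge \frac{t}{2}\Big) \le 2\exp\Big(-\min\Big(\frac{t^2}{4a^2},\frac{t}{2b}\Big)\Big) \le 2\exp\Big(-\frac{1}{4}\min\Big(\frac{t^2}{a^2},\frac{t}{b}\Big)\Big).
\end{displaymath}
For the third step I handle small $t$: when $0 < t < 2C_0(a+b)$ the exponent $\min(t^2/a^2, t/b)$ is bounded by an absolute constant $M$ — if $a \ge b$ then $t < 4C_0 a$ forces $t^2/a^2 < 16C_0^2$, while if $a < b$ then $t < 4C_0 b$ forces $t/b < 4C_0$ — so once $C \ge M/\log 2$ the right-hand side of \eqref{eq:median} is at least $1$ and the inequality holds trivially. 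Choosing $C = \max(4, M/\log 2)$ makes the second and third steps compatible and proves \eqref{eq:median}.

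There is no serious obstacle; the one point requiring care is the small-$t$ regime, where the bound cannot come from the hypothesis directly (the shift by $|\E Z - \Med Z|$ may dominate $t$) and one instead exploits the fact that the claimed right-hand side is vacuous there once the universal constant is taken large enough. Everything else is the elementary tail integration giving $\E|Z - \Med Z| \le C_0(a+b)$, together with the triangle inequality.
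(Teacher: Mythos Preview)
Your proof is correct and follows essentially the same approach as the paper's: bound $|\E Z-\Med Z|$ by tail integration to get $\sqrt{\pi}a+2b$, use this to reduce the large-$t$ case to the hypothesis at $t/2$, and handle small $t$ by noting the exponent is bounded so the inequality is vacuous for large enough $C$. Your small-$t$ case split according to whether $a\ge b$ or $a<b$ is a slightly more explicit version of the paper's one-line observation that $t\le 8\max(a,b)$ forces $\min(t^2/a^2,t/b)$ to be bounded.
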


\begin{proof}
We have
\begin{displaymath}
|\E Z - \Med Z| \le \E|Z-\Med Z| \le 2\int_0^\infty\exp\Big(-\min\Big(\frac{t^2}{a^2},\frac{t}{b}\Big)\Big)dt \le \sqrt{\pi}a + 2b.
\end{displaymath}
Thus for $t > 2\sqrt{\pi}a + 4b$, we have
\begin{displaymath}
\p(|Z-\E Z| \ge t) \le \p(|Z-\Med Z| \ge t/2) \le 2\exp\Big(-\min\Big(\frac{t^2}{4a^2},\frac{t}{2b}\Big)\Big).
\end{displaymath}
On the other hand, there exists an absolute constant $C$, such that for $t \le 2\sqrt{\pi}a + 4b \le 8\max(a,b)$
\begin{displaymath}
\frac{1}{C}\min\Big(\frac{t^2}{a^2},\frac{t}{b}\Big) \le \log 2,
\end{displaymath}
which implies that \eqref{eq:median} is trivially satisfied.
This ends the proof of the lemma.
\end{proof}

Another simple fact we will need is

\begin{lemma}\label{le:quantile_bypass}
Let $S$ and $Z$ be random variables and $a,b,t > 0$ be such that for all $s > 0$,
\begin{align}\label{eq:conc_S}
\p(|S-\E S| \ge s) \le 2\exp(-s^2/(a + \sqrt{bt})^2)
\end{align}
and
\begin{align}\label{eq:coincidence}
\p(S\neq Z) \le 2\exp(-t/b).
\end{align}
Then
\begin{displaymath}
\p(|Z - \Med Z| \ge t) \le 2 \exp\Big(-\frac{1}{C}\min\Big(\frac{t^2}{a^2},\frac{t}{b}\Big)\Big).
\end{displaymath}
\end{lemma}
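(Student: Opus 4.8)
The plan is to exploit the coupling between $S$ and $Z$ provided by \eqref{eq:coincidence}: the two variables agree outside an event of probability at most $2e^{-t/b}$, and on their common part $Z$ inherits the Gaussian-type concentration of $S$ at scale $a+\sqrt{bt}$ given by \eqref{eq:conc_S}. Since the conclusion is stated in terms of $\Med Z$ rather than $\E S$, the one point requiring care will be to show that $\Med Z$ lies within $O(a+\sqrt{bt})$ of $\E S$; this cannot be done by comparing expectations, because $Z$ need not even be integrable and may behave arbitrarily on the small ``bad'' event, so everything must be phrased through the defining property of a median.

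First I would dispose of the trivial regime: if $\frac1C\min(t^2/a^2,t/b)\le\log 2$ for the absolute constant $C$ to be chosen, the right-hand side of the desired bound is at least $2$ and there is nothing to prove. Hence we may assume $\min(t^2/a^2,t/b)\ge C\log 2$, and by taking $C$ large this makes $a$ and $\sqrt{bt}$ as small a fraction of $t$ as we like and makes $2e^{-t/b}\le 1/4$. Next, to locate the median, set $s_0:=(a+\sqrt{bt})\sqrt{\log 16}$, so that \eqref{eq:conc_S} gives $\p(|S-\E S|\ge s_0)\le 1/8$, while \eqref{eq:coincidence} gives $\p(S=Z)\ge 3/4$. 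Intersecting these two events yields $\p(|Z-\E S|<s_0)>1/2$, which by the defining property of a median forces $|\Med Z-\E S|\le s_0$.

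For the tail at level $t$ I would then write
\[
\p(|Z-\Med Z|\ge t)\le\p(S\neq Z)+\p(|S-\Med Z|\ge t)\le 2e^{-t/b}+\p\big(|S-\E S|\ge t-s_0\big),
\]
using $|\Med Z-\E S|\le s_0$ in the last step. In the nontrivial regime $s_0\le t/2$, so \eqref{eq:conc_S} bounds the last term by $2\exp\big(-t^2/(4(a+\sqrt{bt})^2)\big)$; since $(a+\sqrt{bt})^2\le 2a^2+2bt$ and $\frac{1}{x+y}\ge\frac12\min(\frac1x,\frac1y)$, this is at most $2\exp\big(-\frac1{16}\min(t^2/a^2,t/b)\big)$. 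Because also $2e^{-t/b}\le 2\exp\big(-\min(t^2/a^2,t/b)\big)$, the two contributions add up to at most $4\exp\big(-\frac1{16}\min(t^2/a^2,t/b)\big)$, and in the regime where $\min(t^2/a^2,t/b)$ is large the spare factor $4$ is swallowed by the exponent, which yields the stated inequality for an appropriate absolute $C$.

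The main obstacle, as indicated above, is really just the bookkeeping around $\Med Z$: one must run a ``more than half the mass'' argument combining the coupling probability with the concentration of $S$, and this is also what dictates the preliminary reduction to the large-deviation regime, so that $s_0\le t/2$ and the various exchange constants (the $\sqrt{\log 16}$ in $s_0$, the factor $2$ in $t-s_0\ge t/2$, and the final factor $4$) are all harmless.
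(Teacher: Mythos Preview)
Your proof is correct and follows essentially the same strategy as the paper: split off a trivial small-$t$ regime, then in the large-$t$ regime use the coupling $\p(S\neq Z)\le 1/4$ together with the concentration of $S$ to pin $\Med Z$ within $O(a+\sqrt{bt})$ of $\E S$, and finish by the union bound $\p(|Z-\Med Z|\ge t)\le \p(S\neq Z)+\p(|S-\E S|\ge t/2)$. The only slip is cosmetic: in the trivial regime the right-hand side is at least $1$, not at least $2$, but that still dominates any probability; your intersection argument locating the median is a slightly cleaner variant of the paper's quantile-plus-symmetry step (the paper invokes its Lemma on $q_{1/4}S$ and then repeats for $-S,-Z$), but the content is the same.
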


\begin{proof}
Set
\begin{displaymath}
M = a + \sqrt{bt}.
\end{displaymath}
Assume first that $t > \max(3b,2M\sqrt{\log 8})$. We then have $\p(S\neq Z) \le 1/4$ and so $\p(S \le \Med Z) \ge 1/4$, which means that $\Med Z \ge q_{1/4} S$, where $q_p S= \inf\{t\colon \p(S \le t) \ge p\}$.
By Lemma \ref{le:quantiles},
$\Med Z \ge q_{1/4} S \ge \E S - M\sqrt{\log 8}$ and thus
\begin{displaymath}
\p(Z - \Med Z \ge t) \le \p(Z\neq S) + \p(S- \E S  \ge t - M\sqrt{\log 8}) \le \p(S\neq Z) + \p(S- \E S  \ge t/2).
\end{displaymath}
Using \eqref{eq:conc_S} with $s = t/2$ and \eqref{eq:coincidence}, we obtain
\begin{displaymath}
\p(Z - \Med Z \ge t) \le 2\exp(-\frac{t}{b}) + 2\exp(-\frac{t^2}{4M^2}).
\end{displaymath}
Similarly, by replacing $S,Z$, with $-S,-Z$ and using the fact that $-\Med Z$ is a median for $-Z$, we obtain
\begin{displaymath}
\p(Z - \Med Z \le -t) \le 2\exp(-\frac{t}{b}) + 2\exp(-\frac{t^2}{4M^2}).
\end{displaymath}
Thus we have obtained that if $t > \max(3b,2M\sqrt{2\log 8})$, then
\begin{align*}
\p(|Z - \Med Z| \ge  t) &\le 4\exp(-\frac{t}{b}) + 4\exp(-\frac{t^2}{4M^2})\\
&\le 2 \exp\Big(-\frac{1}{C}\min\Big(\frac{t^2}{a^2},\frac{t}{b}\Big)\Big),
\end{align*}
where the last inequality follows by the definition of $M$ and simple calculations. This ends the proof in the case $t > \max(3b,2M\sqrt{\log 8})$.

Note that for $t \le \max(3b,2M\sqrt{\log 8})$, we have
\begin{displaymath}
 \exp(-\frac{t^2}{4M^2}) \ge \frac{1}{8} \;\textrm{or}\; \exp\Big(-\frac{t}{b}\Big) \ge \frac{1}{27},
\end{displaymath}
so trivially
\begin{displaymath}
\p(|Z - \Med Z| \ge  t) \le 27\exp(-\min\Big(\frac{t^2}{4M^2},\frac{t}{b}\Big)\Big) \le 2 \exp\Big(-\frac{1}{C}\min\Big(\frac{t^2}{a^2},\frac{t}{b}\Big)\Big).
\end{displaymath}
\end{proof}

\begin{proof}[Proof of Theorem \ref{thm:Hanson-Wright}]
Since $X^T A X =  X^T(\frac{1}{2}(A+A^T))X$, we can assume that $A$ is symmetric. Thus there exists an orthogonal matrix $U$, such that
$D = U^T X U$ is a diagonal matrix, with diagonal entries $\lambda_1,\ldots,\lambda_n$. Let $Y = UX$ and note that $Y$ also has the convex concentration property with constant $K$. Moreover $XAX^T = Y^T DY$. Thus our goal is to prove that for $t > 0$,
\begin{displaymath}
\p(|Y^T D Y - \E Y^T D Y| \ge t) \le 2\exp\Big(-\frac{1}{CK^2}\min\Big(\frac{t^2}{\|A\|_{HS}^2\|\Cov X\|},\frac{t}{\|A\|}\Big)\Big).
\end{displaymath}
Observe that $\|A\|_{HS}^2 = \sum_{i\le n} \lambda_i^2$ and $\|A\| = \max_{i\le n}|\lambda_i|$.

Let $Y_1,\ldots,Y_n$ be the coordinates of $Y$. We have $Y^T A Y = \sum_{i=1}^n \lambda_i Y_i^2 = \sum_{i=1}^n \lambda_i\ind{\lambda_i > 0}  Y_i^2 + \sum_{i=1}^n \lambda_i\ind{\lambda_i < 0} Y_i^2$
and thus, by the triangle inequality, to demonstrate the theorem it is enough to prove that for every sequence $\mu_1,\ldots,\mu_n$ of nonnegative numbers, we have
\begin{align}\label{eq:to_prove}
\p(|\sum_{i=1}^n \mu_i Y_i^2 - \E \sum_{i=1}^n \mu_iY_i^2| \ge t) \le 2\exp\Big(-\min\Big(\frac{t^2}{CK^2\|\Cov(X)\|\sum_{i=1}^n \mu_i^2},\frac{t}{CK^2\max_{i\le n} \mu_i}\Big)\Big).
\end{align}
Note that for any unit vector $u$, $\langle u,X\rangle$ is a 1-Lipschitz convex function of $X$. Since we also have $\E\langle u,X\rangle = 0$, by the convex concentration property, we get
\begin{displaymath}
u^T \Cov(X) u = \E \langle u, X\rangle^2 = 2\int_0^\infty t \p(\langle u, X\rangle \ge t)dt \le 4\int_0^\infty te^{-t^2/K^2}dt = 2K^2.
\end{displaymath}
This shows that $\|\Cov X\| \le 2K^2$.

Moreover $Y_i = \langle u, X\rangle$, where $u$ is the first row of $U$. Since $u$ is a unit vector, we get in particular
\begin{align}\label{eq:bound_on_EY2}
\E Y_i^2 \le \|\Cov(X)\| \le 2K^2.
\end{align}

Let $\varphi(y) = \sum_{i=1}^n \mu_i^2 y_i^2$ and note that $\nabla \varphi(y) = (2\mu_1y_1,\ldots,2\mu_n y_n)$. Define
\begin{align*}
B &= \{y\in \R^n \colon |\nabla \varphi(y)| \le \sqrt{\E|\nabla \varphi(Y)|^2} + \sqrt{t\max_{i\le n}\mu_i}\} \\
&=
\Big\{y\in \R^n \colon \sqrt{\sum_{i=1}^n \mu_i^2 Y_i^2} \le \sqrt{\sum_{i=1}^n \mu_i^2 \E Y_i^2}+ \frac{1}{2}\sqrt{t\max_{i\le n}\mu_i}\Big\}.
\end{align*}
By the convex concentration property of $Y$ and the fact that the function $y \mapsto \sqrt{\sum_{i=1}^n \mu_i^2 y_i^2}$ is convex and $(\max_{i\le n} \mu_i)$-Lipschitz, we get
\begin{align}\label{eq:setB}
\p(Y \notin B) \le 2\exp(-\frac{t}{4K^2 \max_{i\le n} \mu_i}).
\end{align}

Define now a new function $f\colon \R^n \to \R$ with the formula
\begin{displaymath}
f(y) = \max_{x \in B} (\langle \nabla \varphi(x),y-x\rangle + \varphi(x)).
\end{displaymath}

Note that $f$ is a convex function, moreover for $y,z \in \R^n$,
\begin{align*}
f(y) - f(z) &= \max_{x \in B} (\langle \nabla \varphi(x),y-x\rangle + \varphi(x)) - \max_{x \in B}( \langle \nabla \varphi(x),z-x\rangle + \varphi(x))\le \max_{x \in B} \langle \nabla \varphi(x),y-z\rangle \\
& \le \max_{x \in B}|\nabla \varphi(x)| |y-z| \le M|y-z|,
\end{align*}
where $M = \sqrt{\sum_{i=1}^n \mu_i^2 \E Y_i^2}+ \frac{1}{2}\sqrt{t\max_{i\le n}\mu_i}$.
Thus $f$ is convex and $M$-Lipschitz and so for all $s > 0$,
\begin{align}\label{eq:concentration_f}
\p(|f(Y) - \E f(Y)| \ge s) \le 2\exp(-s^2/K^2M^2).
\end{align}
Moreover, by convexity of $\varphi$, we have $f(y) \le \varphi(y)$ and thus for $y \in B$, we have $f(y) = \varphi(y)$.

Thanks to \eqref{eq:setB} and \eqref{eq:concentration_f} we can now apply Lemma \ref{le:quantile_bypass} with $Z = \varphi(Y)$, $S = f(Y)$, $a = K\sqrt{\sum_{i=1}^n \mu_i^2 \E Y_i^2}$ and $b = 4K^2\max_{i\le n}\mu_i$, we obtain
\begin{align*}
\p(|\varphi(Y) - \Med \varphi(Y)| \ge t) &\le 2\exp\Big(-\min\Big(\frac{t^2}{CK^2\sum_{i=1}^n \mu_i^2 \E Y_i^2},\frac{t}{CK^2\max_{i\le n}\mu_i}\Big)\Big)\\
&\le 2\exp\Big(-\min\Big(\frac{t^2}{CK^2\|\Cov (X)\|\sum_{i=1}^n \mu_i^2},\frac{t}{CK^2\max_{i\le n}\mu_i}\Big)\Big)\\
&\le 2\exp\Big(-\min\Big(\frac{t^2}{2CK^4\sum_{i=1}^n \mu_i^2},\frac{t}{CK^2\max_{i\le n}\mu_i}\Big)\Big),
\end{align*}
where in the two last inequalities we used \eqref{eq:bound_on_EY2}.

Since the above inequality holds for arbitrary $t > 0$, Lemma \ref{le:median} gives \eqref{eq:to_prove}, which ends the proof.
\end{proof}

\begin{proof}[Proof of Theorem \ref{thm:uniform_Hanson-Wright}] By the boundedness assumption on the set $\cA$ and the integrability assumption on $X$ we can assume that the set $\cA$ is finite. Let thus $\cA = \{A\ub{1},\ldots,A\ub{m}\}$, where $A\ub{k}=[a\ub{k}_{ij}]_{i,j\le n}$. Denote also $a\ub{k} = \E X^TA\ub{k}X$ and define the function $f\colon \R^n\to \R$ with the formula
\begin{align}\label{eq:def_f}
f(x) = \max_{k\le m} (x^T A\ub{k} x - a\ub{k}).
\end{align}
Note that $f$ is locally Lipschitz, moreover as the set of roots of a non-zero multivariate polynomial is of Lebesgue measure zero, for every $x$ outside
a set of Lebesgue measure zero, there exists unique $k \le m$, such that
\begin{displaymath}
f(x) = x^T A\ub{k} x - a\ub{k}.
\end{displaymath}
For $k \le m$ let $B_k$ be the set of points $x \in R^n$ such that $k$ is the unique maximizer in \eqref{eq:def_f}. Then $\R^n\setminus(\bigcup_{k\le m} B_k)$ has Lebesgue measure equal to zero, moreover the sets $B_k$ are open. Thus, we have Lebesgue-a.e.
\begin{align*}
\nabla f(x) &= \sum_{k\le m} \Ind{B_k}\Big(2 a\ub{k}_{ii}x_i  + \sum_{j\le n, j\neq i} a\ub{k}_{ij} x_j + \sum_{j\le n,j\neq i} a\ub{k}_{ji} x_j\Big)_{i=1}^n\\
& = \sum_{k\le m} \Ind{B_k}\Big(\sum_{j\le n} a\ub{k}_{ij} x_j + \sum_{j\le n} a\ub{k}_{ji} x_j\Big)_{i=1}^n
\end{align*}
and consequently
\begin{align*}
|\nabla f(x)| &= \sum_{k\le m}\Ind{B_k}\Big(\sum_{i\le n}\Big(\sum_{j\le n} a\ub{k}_{ij} x_j + \sum_{j\le n}a\ub{k}_{ji}x_j\Big)^2\Big)^{1/2}\\
&\le \max_{k\le m} \Big(\sum_{i\le n}\Big(\sum_{j\le n} a\ub{k}_{ij} x_j + \sum_{j\le n}a\ub{k}_{ji}x_j\Big)^2\Big)^{1/2}\\
& = \max_{A\in \cA} |(A+A^T)x|.
\end{align*}

Let now $B = \{x\in \R^n \colon \max_{A\in \cA} |(A+A^T)x| < \|X\|_\cA + \sqrt{t\max_{A\in \cA}\|A\|}\}$ and note that $B$ is an open convex set. Let $\lambda_k$ denote the Lebesgue measure on $\R^k$. By the Fubini theorem, the preceding discussion concerning the differentiability of $f$, the definition of the set $B$ and its convexity, for $\lambda_{2n}$ almost all pairs $(x,y) \in B\times B$ we have
\begin{displaymath}
\lambda_1(\{t\in [0,1]\colon \nabla f(tx+(1-t)y)\;\textrm{exists and}\; |\nabla f(tx+(1-t)y)| \le \|X\|_\cA + \sqrt{t\max_{A\in \cA}\|A\|}\}) = 1.
\end{displaymath}
Since $t \mapsto f(tx+(1-t)y)$ is locally Lipschitz and thus absolutely continuous, we have for such $x,y$,
\begin{align*}
f(x) - f(y) &= \int_0^1 \frac{d}{dt}f(tx+(1-t)y) dt = \int_0^1  \langle \nabla f(tx+(1-t)y),x-y\rangle dt \\
&\le (\|X\|_\cA + \sqrt{t\max_{A\in \cA}\|A\|}) |x-y|.
\end{align*}
By continuity and density arguments, the above inequality clearly extends to all $x,y \in B$, allowing us to conclude that $f$ is $M$-Lipschitz on $ B$ with $M = \|X\|_\cA + \sqrt{t\max_{A\in \cA}\|A\|}$. Let now $g\colon \R^n \to \R$ be any $M$-Lipschitz function, which coincides with $f$ on $B$ (it exists by McShane's lemma, see e.g. Lemma 7.3. in \cite{MR2976521}).
By the concentration property of $X$ we have for all $s > 0$,
 \begin{displaymath}
\p(|g(X)- \E g(X)| \ge s) \le 2\exp(-s^2/K^2M^2)
\end{displaymath}
and
\begin{displaymath}
\p(X \notin  B) = \p\Big(\max_{A\in \cA} |(A+A^T)x|\ge \|X\|_\cA + \sqrt{t\max_{A\in \cA}\|A\|}\Big) \le 2 \exp(-t/4K^2\max_{A\in\cA}\|A\|),
\end{displaymath}
where we used that the function $x\mapsto \max_{A\in\cA} |(A+A^T)x|$ has the Lipschitz constant bounded by $\max_{A \in \cA}\|A+A^T\| \le 2\max_{A\in\cA}\|A\|$.
Thus, Lemma \ref{le:quantile_bypass} with $S = g(X)$, $Z = f(X)$, $a = K\|X\|_\cA$ and $b = 4K^2\max_{A\in \cA}\|A\|$ gives
\begin{displaymath}
\p(|f(X) - \Med f(X)| \ge t) \le 2\exp\Big(-\min\Big(\frac{t^2}{CK^2\|X\|_\cA^2},\frac{t}{CK^2\max_{A\in \cA}\|A\|}\Big)\Big).
\end{displaymath}
Since the above inequality holds for arbitrary $t > 0$, we can use Lemma \ref{le:median} to complete the proof.
\end{proof}

\section{Application. Concentration inequalities for the empirical covariance operator \label{sec:covariance}}

Let us conclude with an application of  Theorem \ref{thm:uniform_Hanson-Wright} in the Gaussian setting, by providing a new proof of the concentration inequality for empirical approximations of the covariance operator of a Banach space valued random variable, proved recently in \cite{2014arXiv1405.2468K} by other methods. Since this part serves mostly as an illustration of applicability of Theorem \ref{thm:uniform_Hanson-Wright}, we do not present the general setting and motivation for this type of results, referring the Reader to the original paper \cite{2014arXiv1405.2468K}.

In the formulation of the following theorem we use $\|\cdot\|$ to denote both a norm of a vector in a Banach space and the operator norm.
\begin{theorem}\label{thm:Koltchinskii-Lounici}
Let $G$ be a Gaussian vector with values in a separable Banach space $E$ and let $\Sigma\colon E^\ast \to E$ be its covariance operator, i.e.
\begin{displaymath}
\Sigma u = \E\langle G,u\rangle G.
\end{displaymath}
Let $G_1,\ldots,G_n$ be i.i.d. copies of $G$ and define $\hat{\Sigma} \colon E^\ast\to E$ with the formula
\begin{displaymath}
\hat{\Sigma} u = \frac{1}{n}\sum_{k=1}^n \langle G_k,u\rangle G_k,\, u\in E^\ast.
\end{displaymath}
Then, for any $t \ge 1$,
\begin{displaymath}
\p\Big(\Big|\|\hat{\Sigma} - \Sigma\| - \E\|\hat{\Sigma} - \Sigma\|\Big| \ge C\|\Sigma\|\Big(1+\sqrt{\frac{r(\Sigma)}{n}}\Big)\sqrt{\frac{t}{n}} + \|\Sigma\|\frac{t}{n}\Big)\le e^{-t},
\end{displaymath}
where $r(\Sigma) = \frac{(\E\|G\|)^2}{\|\Sigma\|}$.
\end{theorem}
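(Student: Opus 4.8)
The plan is to realize $\|\hat\Sigma-\Sigma\|$ as a supremum of quadratic forms in a standard Gaussian vector and to apply Theorem \ref{thm:uniform_Hanson-Wright}. Since $E$ is separable, $G$ has an a.s.\ convergent series representation $G=\sum_{j\ge1}g_jx_j$ with $g_j$ i.i.d.\ standard Gaussian and $x_j\in E$, so that $\langle v,\Sigma u\rangle=\sum_j\langle u,x_j\rangle\langle v,x_j\rangle$ for $u,v\in E^\ast$. I would first truncate at level $N$, replacing $G_k$ by $G_k^{(N)}=\sum_{j\le N}g_j^{(k)}x_j$; all bounds below will be uniform in $N$, and since $\hat\Sigma^{(N)}\to\hat\Sigma$ and $\Sigma^{(N)}\to\Sigma$ in operator norm a.s.\ while the random variables involved are bounded in $L^2$ uniformly in $N$ (by Fernique's estimate), the final tail bound passes to the limit. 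So fix $N$ and set $X=(g_j^{(k)})_{j\le N,\,k\le n}$, a standard Gaussian vector in $\R^{Nn}$, which has the concentration property with an absolute constant $K$.

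By Hahn--Banach, $\|\hat\Sigma^{(N)}-\Sigma^{(N)}\|=\sup_{u,v\in B_{E^\ast}}\langle v,(\hat\Sigma^{(N)}-\Sigma^{(N)})u\rangle$ (the absolute value is redundant since $B_{E^\ast}$ is symmetric). For $u,v\in B_{E^\ast}$ let $A_{u,v}$ be the $Nn\times Nn$ matrix that is block diagonal with respect to $\R^{Nn}=(\R^N)^n$, each block being $\tfrac1n p_vq_u^{T}$ with $p_v=(\langle v,x_i\rangle)_{i\le N}$ and $q_u=(\langle u,x_j\rangle)_{j\le N}$. A direct computation gives $X^{T}A_{u,v}X=\langle v,\hat\Sigma^{(N)}u\rangle$ and $\E X^{T}A_{u,v}X=\tr A_{u,v}=\langle v,\Sigma^{(N)}u\rangle$. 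The family $\cA=\{A_{u,v}\colon u,v\in B_{E^\ast}\}$ is bounded and symmetric ($-A_{u,v}=A_{-u,v}$), so $Z:=\sup_{A\in\cA}(X^{T}AX-\E X^{T}AX)$ equals $\|\hat\Sigma^{(N)}-\Sigma^{(N)}\|$, and it remains to control the two parameters in \eqref{eq:uniform_Hanson-Wright}.

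Each block of $A_{u,v}$ has operator norm $\tfrac1n|p_v||q_u|\le\tfrac1n\|\Sigma\|$, using $|q_u|^2=\sum_{i\le N}\langle u,x_i\rangle^2\le\langle u,\Sigma u\rangle\le\|\Sigma\|$ and similarly for $p_v$; hence $\sup_{A\in\cA}\|A\|\le\|\Sigma\|/n$. For the mixed term, the $k$-th block of $(A_{u,v}+A_{u,v}^{T})X$ equals $\tfrac1n(\langle u,G_k\rangle p_v+\langle v,G_k\rangle q_u)$, so by Cauchy--Schwarz
\begin{displaymath}
\big|(A_{u,v}+A_{u,v}^{T})X\big|^{2}\le\frac{2\|\Sigma\|}{n^{2}}\sum_{k=1}^{n}\big(\langle u,G_k\rangle^{2}+\langle v,G_k\rangle^{2}\big),
\end{displaymath}
and taking the supremum over $u,v\in B_{E^\ast}$ together with the identity $\sup_{u\in B_{E^\ast}}\sum_{k}\langle u,G_k\rangle^2=W^2$, where $W:=\sup_{\alpha\in S^{n-1}}\|\sum_k\alpha_kG_k\|$, yields $\|X\|_\cA\le\tfrac{2}{n}\sqrt{\|\Sigma\|}\,\E W$.

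The crux is the estimate $\E W\le C(\E\|G\|+\sqrt{n\|\Sigma\|})$. For fixed $\alpha\in S^{n-1}$ the vector $\sum_k\alpha_kG_k$ has the law of $G$, so $\E\|\sum_k\alpha_kG_k\|=\E\|G\|$, and by Gaussian concentration applied to the $1$-Lipschitz functional $y\mapsto\|y\|$ its upper deviations are sub-Gaussian with parameter $\sup_{w\in B_{E^\ast}}\langle w,\Sigma w\rangle\le\|\Sigma\|$; a union bound over a $1/2$-net of $S^{n-1}$ of cardinality $\le 5^n$, combined with $W\le2\max_{\alpha\ \text{in the net}}\|\sum_k\alpha_kG_k\|$, gives the claim. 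Since $\E\|G\|=\sqrt{r(\Sigma)\|\Sigma\|}$, this produces $\|X\|_\cA\le C\|\Sigma\|\,n^{-1/2}\big(1+\sqrt{r(\Sigma)/n}\big)$. Feeding $\sup_{A\in\cA}\|A\|\le\|\Sigma\|/n$ and this bound into \eqref{eq:uniform_Hanson-Wright}, and rewriting the two-sided exponential tail in terms of $t\ge1$ — i.e.\ choosing the deviation level to be a large absolute constant times $\|\Sigma\|(1+\sqrt{r(\Sigma)/n})\sqrt{t/n}+\|\Sigma\|t/n$ — gives the assertion after letting $N\to\infty$. The only genuinely non-mechanical points are the estimate on $\E W$ and the (routine) truncation/limiting argument; everything else is an identification of quantities and a substitution into Theorem \ref{thm:uniform_Hanson-Wright}.
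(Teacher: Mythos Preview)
Your proof is correct and follows the same overall strategy as the paper: represent $\|\hat\Sigma-\Sigma\|$ as a supremum of quadratic forms in a standard Gaussian vector via the Karhunen--Lo\`eve expansion, apply Theorem~\ref{thm:uniform_Hanson-Wright}, and estimate the two parameters $\sup_{A\in\cA}\|A\|$ and $\|X\|_\cA$. The identification of the matrices, the bound $\sup_{A\in\cA}\|A\|\le\|\Sigma\|/n$, and the reduction of $\|X\|_\cA$ to a multiple of $\E W$ with $W=\sup_{\alpha\in S^{n-1}}\|\sum_k\alpha_kG_k\|$ all coincide with the paper's computations (the paper splits into $\E\sup|AX|+\E\sup|A^TX|$ and arrives at exactly the same quantity $n^{-1}\|\Sigma\|^{1/2}\E W$).

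The one substantive difference is how $\E W$ is bounded. The paper invokes the Gordon--Chevet inequality for the operator $\Gamma=\sum_{k,i}g_{ki}\,x_i\otimes e_k\colon E^\ast\to\ell_2^n$, obtaining directly $\E W\le\sqrt{n}\,\|\Sigma\|^{1/2}+\E\|G\|$. You instead use a $\tfrac12$-net on $S^{n-1}$ together with Gaussian concentration for the $1$-Lipschitz norm functional (with sub-Gaussian parameter $\|\Sigma\|$) and a union bound, which yields the same estimate up to an absolute constant. Your argument is more elementary and self-contained (no need for the Chevet/Gordon comparison), at the cost of a slightly worse numerical constant; the paper's route is sharper and shorter once Gordon--Chevet is granted. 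Either tool is standard, and both lead to the same final inequality.
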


\begin{proof}
By the Karhunen-Lo\`eve theorem, there exists a sequence $x_k \in E$, such that almost surely
\begin{displaymath}
G = \sum_{j=1}^\infty x_j g_j,
\end{displaymath}
where $g_j$ are i.i.d. standard Gaussian variables. Let $\{g_{ij}\}_{1\le i\le n,j \in \N}$ be an array of i.i.d. standard Gaussian variables. We can assume that
\begin{displaymath}
G_i = \sum_{j=1}^\infty x_j g_{ij}.
\end{displaymath}
Then
\begin{displaymath}
\hat{\Sigma}u = \frac{1}{n}\sum_{k=1}^n\sum_{i=1}^\infty \sum_{j=1}^\infty \langle x_i,u\rangle x_j g_{ki}g_{kj}
\end{displaymath}
and
\begin{displaymath}
\Sigma u = \sum_{j=1}^\infty \langle x_j,u\rangle x_j.
\end{displaymath}
Therefore, denoting by $B^\ast$ the unit ball of $E^\ast$, we get
\begin{displaymath}
\|\hat{\Sigma} - \Sigma\| = \sup_{u,v \in B^\ast} \Big(\frac{1}{n}\sum_{k=1}^n\sum_{i=1}^\infty\sum_{j=1}^\infty \langle x_i,u\rangle\langle x_j, v\rangle g_{ki}g_{kj} - \E \frac{1}{n}\sum_{k=1}^n\sum_{i=1}^\infty\sum_{j=1}^\infty \langle x_i,u\rangle\langle x_j, v\rangle g_{ki}g_{kj} \Big),
\end{displaymath}
which puts us in position to use Theorem \ref{thm:uniform_Hanson-Wright} with $\cA = \{[n^{-1}\langle x_i,u\rangle\langle x_j,v\rangle\ind{k=l}]_{(k,i),(l,j)}\colon u,v\in B^\ast\}$ and $X = (g_{ki})_{k\le n,i\le \infty}$ (we skip the standard details of approximation by finite dimensional vectors).

Let us estimate the parameters of Theorem \ref{thm:uniform_Hanson-Wright}. Using the fact that each $A \in \cA $ is a block matrix with blocks of the form $\frac{1}{n}(\langle x_i,u\rangle)_{i=1}^\infty\otimes
(\langle x_j,v\rangle)_{j=1}^\infty$, one easily gets that
\begin{align*}
 \sup_{A\in\cA} \|A\|= \frac{1}{n}\sup_{u,v \in B^\ast} \Big(\sum_{i=1}^\infty \langle x_i,u\rangle^2\Big)^{1/2}\Big(\sum_{j=1}^\infty \langle x_j,v\rangle^2\Big)^{1/2} = \frac{1}{n}\sup_{u \in B^\ast} \sum_{i=1}^\infty \langle x_i,u\rangle^2 = \frac{1}{n}\|\Sigma\|.
\end{align*}
Passing to $\|X\|_{\cA}$, we have
\begin{align}\label{eq:bound_on_|X|}
\|X\|_\cA \le \E\sup_{A \in \cA}|AX| + \E \sup_{A\in\cA}|A^TX|.
\end{align}
Now,
\begin{align}\label{eq:expectation}
\E \sup_{A\in \cA}|A^TX| = &\frac{1}{n}\E\sup_{u,v\in B^\ast}\Big(\sum_{k=1}^n\sum_{j=1}^\infty\langle x_j,v\rangle^2\Big(\sum_{i=1}^\infty\langle x_i,u\rangle g_{ki}\Big)^2\Big)^{1/2}\\
& = \frac{1}{n}\sup_{v\in B^\ast}\Big(\sum_{j=1}^\infty\langle x_j,v\rangle^2 \Big)^{1/2}\E\sup_{u \in B^\ast}\Big(\sum_{k=1}^n \Big(\sum_{i=1}^\infty\langle x_i,u\rangle g_{ki}\Big)^2\Big)^{1/2}\nonumber\\
& = \frac{1}{n}\|\Sigma\|^{1/2}\E\sup_{u \in B^\ast}\Big(\sum_{k=1}^n \Big(\sum_{i=1}^\infty\langle x_i,u\rangle g_{ki}\Big)^2\Big)^{1/2}\nonumber
\end{align}
To bound the last expectation, we can use the Gordon-Chevet inequality \cite{Chevet,MR800188}, which asserts that for any Banach spaces $E,F$ and points $x_i \in E$, $y_k \in F$, the random operator
\begin{displaymath}
\Gamma = \sum_{i,k} g_{ki} x_i\otimes y_k \colon E^\ast \to F,
\end{displaymath}
satisfies
\begin{align*}
\E \|\Gamma\|_{E^\ast\to F} \le& \sup\{\|\sum_i t_i x_i\|_E\colon \sum_i t_i^2 =1\}\E \|\sum_k g_k y_k\|_F \\
&+ \sup\{\|\sum_k t_k y_k\|_F\colon \sum_k t_k^2 =1\}\E\|\sum_i g_i x_i\|_E,
\end{align*}
where $g_i$'s are i.i.d. standard Gaussian variables.

Applying this inequality with $\Gamma = \sum_{k,i} g_{ki} x_i\otimes y_k \colon E^\ast \to \ell_2^n$,
where $y_1,\ldots,y_n$ is the standard basis of $\ell_2^n$, we get
\begin{align*}
&\E\sup_{u \in B^\ast}\Big(\sum_{k=1}^n \Big(\sum_{j=1}^\infty\langle x_i,u\rangle g_{ki}\Big)^2\Big)^{1/2} = \E \|\Gamma\|_{E^\ast \to \ell_2^n}\\
&\le \sup\{\|\sum_{i=1}^\infty t_i x_i\| \colon \sum_{i=1}^\infty t_i^2 =1\} \E |\sum_{k=1}^n g_k y_k| + \sup\{|\sum_{k=1}^n t_k y_k| \colon \sum_{k=1}^n t_k^2 =1\}\E\|\sum_{i=1}^\infty g_i x_i\|\\
& \le \sup_{u \in B^\ast} \Big(\sum_{i=1}^\infty \langle x_i,u\rangle^2\Big)^{1/2}\sqrt{n} + 1\cdot\E\|G\| = \|\Sigma\|^{1/2}\sqrt{n} + \E\|G\|.
\end{align*}
Going back to \eqref{eq:expectation}, we get
\begin{displaymath}
\E\sup_{A\in \cA} |A^T X|\le \frac{\|\Sigma\|}{\sqrt{n}} + \frac{\|\Sigma\|^{1/2}\E\|G\|}{n}.
\end{displaymath}
By symmetry, an analogous bound holds for the other expectation on the right-hand side of \eqref{eq:bound_on_|X|}, hence
\begin{displaymath}
\|X\|_\cA \le 2 \frac{\|\Sigma\|}{\sqrt{n}} + 2\frac{\|\Sigma\|^{1/2}\E\|X\|}{n} = 2\frac{\|\Sigma\|}{\sqrt{n}} + 2\frac{\|\Sigma\|}{\sqrt{n}} \sqrt{\frac{r(\Sigma)}{n}}
\end{displaymath}
Combining this with the estimate on $\sup_{A\in \cA}\|A\|$ and Theorem \ref{thm:uniform_Hanson-Wright}, we get for $t \ge 1$,
\begin{displaymath}
\p\Big(\Big|\|\hat{\Sigma} - \Sigma\| - \E\|\hat{\Sigma} - \Sigma\|\Big| \ge C\|\Sigma\|\Big(1+\sqrt{\frac{r(\Sigma)}{n}}\Big)\sqrt{\frac{t}{n}} + \|\Sigma\|\frac{t}{n}\Big)\le e^{-t},
\end{displaymath}
which ends the proof.
\end{proof}

\bibliographystyle{abbrv}
\bibliography{citations}
\end{document}